\documentclass[a4paper,11pt]{article}

\usepackage{amsmath, amssymb}
\usepackage[english]{babel}
  \usepackage{paralist}
  \usepackage{graphics} 
  \usepackage{epsfig} 
\usepackage{graphicx}
\usepackage{epstopdf}
 \usepackage[colorlinks=true]{hyperref}
 \usepackage{array}
\usepackage[latin1]{inputenc}
\usepackage{xcolor}
\usepackage{esint}
\usepackage{latexsym,amsfonts}
\usepackage{amsthm}

\usepackage{verbatim}




\usepackage{array}
\vfuzz3pt 
\hfuzz3pt 


\newtheorem{thm}{Theorem}

\newtheorem{cor}{Corollary}[section]

\newtheorem{lem}{Lemma}[section]

\newtheorem{prop}{Proposition}[section]

\theoremstyle{definition}

\newtheorem*{theorem*}{Theorem}

\newtheorem*{remark}{Remark}

\numberwithin{equation}{section}


\def\Re{\mathfrak{Re\:}} 

\newcommand{\abs}[1]{\left\vert#1\right\vert} 
\newcommand{\brck}[1]{\left[#1\right]} 
\newcommand{\pare}[1]{\left(#1\right)} 

\newcommand{\set}[1]{\left\{#1\right\}}

\newcommand{\A}{\mathcal A} 

\newcommand{\R}{\mathbb R}
\newcommand{\N}{\mathbb N}

\newcommand{\Z}{\mathbb Z}

\newcommand{\C}{\mathbb C}

\newcommand{\U}{\mathfrak{U}} 
\newcommand{\TU}{\widetilde{\mathfrak{U}}} 
\newcommand{\bigint}{\begin{picture}(10,10)
\put(-1,2){\line(1,0){10}}
\end{picture}\kern-14pt\int}

\def\Xint#1{\mathchoice
   {\XXint\displaystyle\textstyle{#1}}%
   {\XXint\textstyle\scriptstyle{#1}}%
   {\XXint\scriptstyle\scriptscriptstyle{#1}}%
   {\XXint\scriptscriptstyle\scriptscriptstyle{#1}}%
   \!\int}
\def\XXint#1#2#3{{\setbox0=\hbox{$#1{#2#3}{\int}$}
     \vcenter{\hbox{$#2#3$}}\kern-.5\wd0}}

\def\dashint{\Xint-}


\begin{document}

\title{On the asymptotic mean value property for planar $p$-harmonic functions}
\author{ANGEL ARROYO and  JOS\'{E} G. LLORENTE }
\date{Departament de Matem\`{a}tiques \\ Universitat Aut\`onoma de Barcelona \\ 08193 Bellaterra. Barcelona \\SPAIN \\
arroyo@mat.uab.cat \\jgllorente@mat.uab.cat }

\maketitle

\footnotetext{  \emph{Key words:} mean value property, $p$-harmonic
function. MSC2010: 31C05, 35B60, 31C45. Partially supported by
grants MTM2011-24606, MTM2014-51824-P and 2014 SGR 75.}

\begin{abstract}
We show that $p$-harmonic functions in the plane satisfy a nonlinear
asymptotic mean value property for $p>1$. This extends previous
results of Manfredi and Lindqvist for certain range of $p$'s.
\end{abstract}


\section{Introduction}

It is well known that harmonic functions in euclidean domains are
those continous functions satisfying the usual mean value property.
Actually, harmonic functions can  be characterized by the so called
asymptotic mean value property:
$$
u(x) = \dashint_{B(x,r)} \hspace{-0.3cm} u(y) dy + o(r^2 )
$$
as $r\to 0$.

It is a challenging problem to try to find similar characterizations
for solutions of other nonlinear differential operators such as the
$p$-laplacian. We recall that a function $u\in W^{1,p}_{loc}(\Omega
) $ is $p$-harmonic in a domain $\Omega \subset \R^d$ if it is a
weak solution of the $p$-laplace equation
$$
div ( |\nabla u |^{p-2}\nabla u ) = 0
$$


If $u\in C^2$ and $\nabla u(x) \neq 0$ then  direct computation
shows that
\begin{equation}\label{plapl}
\triangle_p u \equiv div ( |\nabla u |^{p-2}\nabla u ) = |\nabla u
|^{p-2} \Big [ (p-2)\frac{\triangle_{\infty}u}{|\nabla u|^2} +
\triangle u \Big ]
\end{equation}
where $\triangle_{\infty}$ is the so called infinity laplacian
which, for $C^2$ functions, is given by
$$
\triangle_{\infty}u = \sum_{i,j=1}^d u_{x_i , x_j}u_{x_i}u_{x_j}
$$
On the other hand it follows essentially from Taylor's formula that
\begin{eqnarray}
\lim_{r\to 0}\frac{1}{r^2} \Big [ \frac{1}{2}\big ( \sup_{B(x,r)}u +
\inf_{B(x,r)}u \big ) - u(x) \Big ] & =
\displaystyle \frac{\triangle_{\infty}u(x)}{2|\nabla u (x)|^2} \label{inflapl} \\
\lim_{r\to 0}\frac{1}{r^2} \Big ( \dashint_{B(x,r)}\hspace{-0.3cm}u
- u(x) \Big ) & = \displaystyle \frac{\triangle u (x)}{2(d+2)}
\label{lapl}
\end{eqnarray}
where $B(x,r)$ denotes the open ball centered at $x$ of radius $r$.
From (\ref{plapl}),(\ref{inflapl}) and (\ref{lapl}) it can be shown
that if $u\in C^2$, $\triangle_p u = 0$ and $\nabla u (x) \neq 0$
then
\begin{equation}\label{AMVP0}
u(x) = \frac{p-2}{ p+d } \cdot \frac{1}{2}\Big ( \sup_{B(x,r)}u +
\inf_{B(x,r)}u \Big ) + \frac{2+d}{p+d}\, \dashint_{B(x,r)}
\hspace{-0.2cm}u(y)dy + o(r^2 )
\end{equation}
as $r\to 0$. Since $p$-harmonic functions are $C^{1, \alpha}$ for
some $0 < \alpha < 1$ but not $C^2$ in general (\cite{Ur} ,
\cite{Le}), it is not clear whether (\ref{AMVP0}) should be  true in
the general case. However, in \cite{PaMaRo} the authors proved that
if $u\in C(\Omega ) \cap W^{1,p}_{loc}(\Omega ) $ then $u$ is
$p$-harmonic in $\Omega $  if and only if  (\ref{AMVP0}) holds in a
weak(viscosity) sense. From \cite{JuLiMa} and \cite{PaMaRo}, it
follows that if $u$ is continous and satisfies (\ref{AMVP0}) in
classical sense then $u$ is $p$-harmonic.

\

More information is available when $d=2$. If $u$ is $p$-harmonic in
a planar domain  then the complex gradient $\partial u =
\frac{1}{2}(u_x - iu_y )$ is a quasiregular mapping( so the critical
points are isolated, unless $u$ is constant) and $u$ is $C^{\infty}$
in $\{ \nabla u \neq 0 \}$ (see \cite{BOJ-IWAN}, \cite{IWAN-MAN}).
Lindqvist and Manfredi have recently proven that in the plane
$p$-harmonicity is equivalent to the asymptotic mean value property(
in classical sense)  for a certain range of $p$'s. Hereafter we
denote by $D(x,r)$ the open disc of center $x$ and radius $r$.

\begin{theorem*} (\cite{LIND-MAN})
Let $\Omega \subset \R^2$ be a domain and let $1 < p < p_0 =
9.52...$. Then $u\in C(\Omega ) \cap W^{1,p}_{loc}(\Omega)$ is
$p$-harmonic in $\Omega$ if and only if the asymptotic expansion
\begin{equation} \label{AMVP}
        u(x) = \frac{p-2}{p+2}\cdot  \frac{1}{2} \Big (  \sup_{D(x,r)}u + \inf_{D(x,r)}u \Big )  +
        \frac{4}{p+2} \dashint_{D(x,r)} u(y) dy + o(r^2)
\end{equation}
holds at each $x\in \Omega$, as $r\to 0$.
\end{theorem*}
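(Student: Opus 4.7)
The statement is an equivalence, and the reverse implication is essentially free: if $u\in C(\Omega)\cap W^{1,p}_{loc}(\Omega)$ satisfies \eqref{AMVP} in the classical sense, then it satisfies it in the viscosity sense too, and the combination of \cite{JuLiMa} and \cite{PaMaRo} recalled in the introduction gives $p$-harmonicity. So all the work lies in the direct direction.

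For the direct direction, the plan is to fix $x_0\in\Omega$ and split on whether $\nabla u(x_0)$ vanishes. If $\nabla u(x_0)\neq 0$, then by the planar regularity theory (\cite{BOJ-IWAN}, \cite{IWAN-MAN}) the function $u$ is $C^\infty$ in a neighborhood of $x_0$. The derivation already sketched in the introduction — Taylor's formula combined with \eqref{plapl}, \eqref{inflapl}, \eqref{lapl} — then applies pointwise and produces \eqref{AMVP0}, which specialised to $d=2$ is exactly \eqref{AMVP}. No work beyond what is already in the introduction is needed at non-critical points.

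The interesting case is $\nabla u(x_0)=0$, where \eqref{inflapl} is singular and the previous argument breaks down. Since the complex gradient $\partial u$ is quasiregular, I may assume (discarding the trivial case $u\equiv\text{const}$) that $x_0$ is an isolated critical point. My plan is to invoke the local structure theorem for planar $p$-harmonic functions at isolated critical points from \cite{IWAN-MAN}: locally
$$u(z) - u(x_0) \;=\; |z-x_0|^{\,s}\,\Phi(\arg(z-x_0)) + o(|z-x_0|^{s})$$
with exponent $s=s(p,k)>1$ depending on $p$ and on the order $k\ge 1$ of vanishing of $\partial u$ at $x_0$, and $\Phi$ the circular profile of a homogeneous $p$-harmonic function. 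Substituting this expansion into \eqref{AMVP} at $x=x_0$ will produce explicit leading asymptotics: both $\tfrac12(\sup_{D(x_0,r)}u+\inf_{D(x_0,r)}u)-u(x_0)$ and $\dashint_{D(x_0,r)}u-u(x_0)$ are of the form $c(\Phi)\,r^{s}+o(r^{s})$, so the asymptotic \eqref{AMVP} at $x_0$ reduces to an algebraic identity involving $s$, the two moment-type constants coming from $\Phi$, and the weights $\tfrac{p-2}{p+2}$ and $\tfrac{4}{p+2}$.

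The main obstacle, and the origin of the threshold $p_0=9.52\ldots$, will be this last algebraic step. The expected dichotomy is that for every admissible pair $(p,k)$ with $1<p<p_0$ and $k\ge 1$, either $s(p,k)>2$ — in which case both terms above are $o(r^2)$ and \eqref{AMVP} is automatic at $x_0$ — or the weighted combination of the leading coefficients vanishes thanks to the ODE on the circle obtained from the polar form of \eqref{plapl}, which governs $\Phi$. The explicit number $p_0$ is then the first value of $p$ at which the smallest admissible $s(p,k)$ drops to $2$; beyond it new low-homogeneity profiles appear for which neither absorption nor exact cancellation holds, which is precisely the obstruction that the present paper will need to bypass in order to extend the result to all $p>1$.
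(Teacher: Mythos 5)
Your treatment of the converse direction and of non-critical points matches the paper, and your instinct to reduce everything to the local behaviour at an isolated critical point via the Iwaniec--Manfredi hodographic representation is also the right one. But the critical-point analysis has two genuine gaps. First, the error term in your structure formula is too weak. The leading profile is quasi-homogeneous of degree $s=1+n/\lambda_{n+1}$ (in the paper's notation, $n$ being the order of the critical point), and already for $n=1$ one has $s<2$ as soon as $p>2$ --- well inside the range $p<p_0$. When $s\le 2$, knowing only that the remainder is $o(r^{s})$ tells you nothing about whether it is $o(r^{2})$, which is what \eqref{AMVP} demands; you need a quantitative second-order exponent. This is exactly Proposition \ref{SINGULAREXPANSION} combined with Lemma \ref{LEMMA06}: the remainder is $O\bigl(r^{(n+\lambda_{n+2})/\lambda_{n+1}}\bigr)$ and the exponent $(n+\lambda_{n+2})/\lambda_{n+1}$ is shown to exceed $2$ for every $p>1$ and every $n$. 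Establishing that inequality, together with the injectivity estimates for the first term of the hodographic series needed to transfer the expansion from the $\xi$-plane back to the $z$-plane, is where essentially all of the work lies, and your proposal does not identify it.

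Second, the mechanism you predict at critical points --- a weighted cancellation between the leading coefficients of $\tfrac12(\sup+\inf)$ and of the mean, tuned by the weights $\tfrac{p-2}{p+2}$ and $\tfrac{4}{p+2}$ --- is not what happens. By the symmetries of the leading profile and of the hodographic disc, \emph{each} of the two leading coefficients vanishes separately (Lemma \ref{LEMMA-UGOTHIC}: $\sup_{D_R}\U+\inf_{D_R}\U=0$ and $\int_{D_R}\U=0$), so at a critical point \eqref{AMVP} holds with \emph{arbitrary} weights $\alpha$, $1-\alpha$; the specific coefficients only matter at non-critical points. Consequently your explanation of the threshold $p_0=9.52\ldots$ (``the smallest admissible $s$ drops to $2$, after which new profiles obstruct the identity'') is incorrect: $s$ already drops below $2$ at $p=2$, and the main theorem of this very paper shows there is no obstruction at $p_0$ at all --- the restriction in Lindqvist--Manfredi is an artifact of their error estimates, which the present argument removes.
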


Our main result is an extension of Lindqvist and Manfredi's theorem
to the whole range of $p$'s.

\begin{thm}
Let $\Omega \subset \R^2$ be a domain and let $1<p<\infty$. Then
$u\in C(\Omega)\cap W^{1,p}_{loc}(\Omega)$ is $p$-harmonic in
$\Omega$ if and only if the asymptotic expansion (\ref{AMVP})
holds at each $x\in\Omega$, as $r\to 0$.
\end{thm}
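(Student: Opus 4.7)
My plan for proving the theorem:

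The ``if'' direction has been quoted in the introduction from \cite{JuLiMa} and \cite{PaMaRo}, so the substance of the theorem is the ``only if'' direction. Fix a $p$-harmonic $u$ in $\Omega$ and $x_0\in\Omega$. If $\nabla u(x_0)\neq 0$, then by \cite{IWAN-MAN} $u$ is $C^\infty$ near $x_0$, the $p$-Laplace equation combined with (\ref{plapl}) gives $(p-2)\,\triangle_\infty u(x_0)/|\nabla u(x_0)|^2 + \triangle u(x_0)=0$, and substituting this into the right-hand side of (\ref{AMVP}) via (\ref{inflapl}) and (\ref{lapl}) yields the claimed expansion with an $o(r^2)$ remainder by Taylor's formula. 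This argument is independent of $p$ and is already present in \cite{LIND-MAN}.

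The whole difficulty therefore lies at points where $\nabla u(x_0)=0$. Since $\partial u$ is quasiregular (\cite{BOJ-IWAN,IWAN-MAN}), critical points are isolated (assume $u$ nonconstant), so it is enough to work locally around $x_0$ and to assume $u(x_0)=0$. My plan is to exploit the Aronsson-type classification of homogeneous planar $p$-harmonic functions: after a rotation, $u$ admits a local expansion
$$ u(x_0 + \rho e^{i\theta}) = \rho^{N} g(\theta) + R(\rho,\theta),\qquad R(\rho,\theta)=o(\rho^{N}), $$
in which $V(\rho,\theta)=\rho^N g(\theta)$ is a homogeneous $p$-harmonic function whose exponent $N>1$ and angular profile $g$ are determined by the local topological degree of $\partial u$ at $x_0$ through an ODE derived from $\triangle_p V=0$.

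By homogeneity each of the three averages appearing in (\ref{AMVP}), evaluated for $V$ at the origin, is an explicit multiple of $\rho^N$: one computes directly that $\sup_{D(0,\rho)}V=\rho^N\max g$, $\inf_{D(0,\rho)}V=\rho^N\min g$ and $\dashint_{D(0,\rho)}V=\tfrac{2\rho^N}{N+2}\,\bar g$, where $\bar g$ is the angular mean of $g$. Hence the asymptotic identity (\ref{AMVP}) for $V$ collapses to the single algebraic equation
$$ \frac{p-2}{p+2}\cdot\frac{\max g+\min g}{2} \;+\; \frac{4}{p+2}\cdot\frac{2\bar g}{N+2} \;=\; 0. $$
The central computation to carry out is to verify this identity by writing $\triangle_p V=0$ in polar coordinates, extracting the ODE satisfied by $g$, and integrating it against well-chosen test weights; it is precisely the structure of this ODE together with the admissibility relation linking $N$ and $p$ that produces the coefficients $(p-2)/(p+2)$ and $4/(p+2)$ of (\ref{AMVP}).

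Once (\ref{AMVP}) is known \emph{exactly} for every such homogeneous model $V$, the claim for $u$ reduces to controlling $R$, and here lies the main obstacle: the natural bound $R=o(\rho^{N})$ is weaker than $o(\rho^2)$ exactly in the range $1<N\le 2$, which is the range inaccessible to \cite{LIND-MAN}. The remedy I propose is to iterate the expansion, writing $u=V_1+V_2+\cdots$ as a finite sum of homogeneous $p$-harmonic correctors with strictly increasing exponents $N_1<N_2<\cdots$, applying the collapsing identity to each $V_j$, and stopping at the first exponent that exceeds $2$; the discreteness of admissible exponents ensures that finitely many steps suffice, and the final tail then satisfies an $o(\rho^2)$ bound, yielding (\ref{AMVP}) for $u$. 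The delicate step throughout is the exact cancellation identity for each model $V_j$; everything else is bookkeeping.
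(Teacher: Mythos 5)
Your reduction to critical points and your identification of the leading behaviour there as a homogeneous (Aronsson-type, quasi-radial) $p$-harmonic function $V=\rho^{N}g(\theta)$ are consistent with what the paper does: its $\U=\TU\circ\A^{-1}$ is exactly such a function, built from the Iwaniec--Manfredi hodographic series. One conceptual remark first: the ``delicate cancellation identity'' you expect, tying $\max g+\min g$ to $\bar g$ through the coefficients $(p-2)/(p+2)$ and $4/(p+2)$, does not exist. For the homogeneous model both quantities vanish \emph{separately} ($\sup_{D_R}\U+\inf_{D_R}\U=0$ and $\int_{D_R}\U=0$, Lemma \ref{LEMMA-UGOTHIC}, by symmetry of the angular profile), so at a critical point \emph{any} affine combination of the two averages is $o(r^2)$ -- this is the paper's closing Remark. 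Your identity is therefore true but for a trivial reason, and no ODE computation with test weights is needed.

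The genuine gap is in your treatment of the remainder. Your proposed iteration $u=V_1+V_2+\cdots$ into homogeneous $p$-harmonic correctors with increasing exponents is not available: the $p$-Laplacian is nonlinear, so there is no reason the successive corrections to $u$ beyond the leading term are themselves homogeneous $p$-harmonic functions, and hence no reason their angular means or their $\max+\min$ vanish; moreover $\sup$ and $\inf$ do not decompose over sums, so ``applying the collapsing identity to each $V_j$'' is not meaningful for the $\sup+\inf$ term even formally. What actually closes the argument in the paper is a quantitative fact you are missing: the one-term singular expansion $u=\U+O(|z|^{\beta})$ of Proposition \ref{SINGULAREXPANSION} already has error exponent $\beta=(n+\lambda_{n+2})/\lambda_{n+1}$, and Lemma \ref{LEMMA06} shows $\beta>2$ for \emph{all} $1<p<\infty$ and all $n\geq 1$. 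This is precisely the point that removes the restriction $p<p_0=9.52\ldots$ of Lindqvist--Manfredi, and it requires the quantitative injectivity estimates for $\A$ and the perturbation argument of Sections 3--4, not an iteration. Without establishing either the existence of your multi-term decomposition or a bound of the form $u-V_1=O(\rho^{\beta})$ with $\beta>2$, your plan cannot handle the range $N_1\leq 2$ (which does occur, e.g.\ $n=1$, $p\geq 2$), and the proof does not close.
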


It is enough to prove that a $p$-harmonic function satisfies
(\ref{AMVP}) at a critical point. Indeed, by the previous comments,
continous functions satisfying (\ref{AMVP}) are $p$-harmonic and, on
the other hand,  $p$-harmonic functions satisfy (\ref{AMVP}) at
noncritical points. Therefore we will focus on the local behavior of
a $p$-harmonic function  around a critical point of multiplicity
$n$. As in \cite{LIND-MAN}, the method exploits the power series
expansion of the complex gradient in the hodographic plane that was
obtained in \cite{IWAN-MAN}.

\section{The hodographic representation of $u$}

Let $u$ be a $p$-harmonic function with a critical point of
multiplicity $n$ at the origin and let
\begin{equation*}
    \partial u(z) = \frac{1}{2}(u_x-iu_y)
\end{equation*}
be the complex gradient of $u$. We can represent $\partial u(z) =
(\chi(z))^n$ where $\chi$ is quasiconformal in a neighborhood of the
origin and $\chi(0) = 0$ (see \cite{BOJ-IWAN}). In the hodographic
plane, $\xi=\chi(z)$, and, according to \cite{IWAN-MAN}, the inverse
of $\chi$ is given by
\begin{equation}\label{Hformula}
    z = H(\xi) = \sum_{k=n+1}^\infty \brck{ A_k \pare{\frac{\xi}{\abs{\xi}}}^k + \varepsilon_k \overline{A_k}
    \pare{\frac{\overline{\xi}}{\abs{\xi}}}^k}
    \pare{\frac{\xi}{\abs{\xi}}}^{-n} \abs{\xi}^{\lambda_k}
\end{equation}
where $A_k \in \C$, $A_{n+1} \neq 0$ and
\begin{equation}\label{sumAk}
\sum_{k=n+1}^\infty k \abs{A_k}^2<\infty
\end{equation}
Furthermore,
\begin{equation}\label{eps-lamb}
    \varepsilon_k = \frac{\lambda_k +n -k}{\lambda_k +n +k} \, \,  , \, \, \, \, \, \, \lambda_k
    = \frac{1}{2} \pare{ \sqrt{4k^2 (p-1) + n^2 (p-2)^2} -np}.
\end{equation}
From (\ref{eps-lamb}) it is easy to check that
\begin{equation}\label{eps-lamb bound}
0<\lambda_k<\frac{k^2-n^2}{n} \, \, , \, \, \, \, \, \,
\abs{\varepsilon_k}<\frac{k-n}{k+n}
\end{equation}.

Equation (\ref{Hformula})
 can be interpreted as the "hodographic
representation" of the point $z=x+iy$ near the origin.

Therefore, if $\xi = r e^{i\theta}$, we can rewrite (\ref{Hformula})
as
\begin{equation}\label{Hpolar}
    H(re^{i\theta}) = e^{-in\theta} \sum_{k=n+1}^\infty r^{\lambda_k} \varphi_k(\theta)
\end{equation}
where
\begin{equation*}
    \varphi_k(\theta) = A_k e^{ik\theta} + \varepsilon_k \overline{A_k} e^{-ik\theta}
\end{equation*}
for each $k=n+1,n+2,\ldots$ We can split $H(\xi)$ into its real and imaginary parts, i.e., $H(\xi) = \tilde{z}(\xi) = \tilde{x}(\xi) +i\tilde{y}(\xi)$.

Lets denote by $\tilde{u}$ the hodographic representation of $u$,
i.e.,
\begin{equation}\label{utilde}
    \tilde{u}(\xi) = (u\circ H)(\xi).
\end{equation}
Moreover, we can easily write $\partial u(z)$ in terms of $\xi$:
\begin{equation*}
    \partial u(z) = \partial u(H(\xi))=\xi^n
\end{equation*}
or, equivalently, if $\xi=re^{i\theta}$,
\begin{equation}\label{polargrad}
    \left\{
    \begin{array}{l}
    u_x = 2r^n \cos(n\theta) \\
    u_y = -2r^n \sin(n\theta)
    \end{array}
    \right.
\end{equation}
\\

\begin{prop}\label{HODOREP}
Let $u$ be a $p$-harmonic function  with a critical point of order
$n$ at $z=0$.  Then the hodographic representation $\tilde{u}$ has
the following power series expansion in a neighborhood of $\xi=0$:
\begin{equation*}
    \tilde{u}(\xi) = u(0) + 4 \sum_{k=n+1}^\infty \mu_k \abs{\xi}^{n+\lambda_k} \Re\set{ A_k \pare{\frac{\xi}{\abs{\xi}}}^k },
\end{equation*}
where $\displaystyle{ \mu_k = \frac{\lambda_k}{\lambda_k +n +k} }$.
Moreover, $0 \leq \mu_k <1-\frac{n}{k}$.
\end{prop}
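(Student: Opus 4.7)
My plan is to compute $\tilde u = u\circ H$ by differentiating and integrating, exploiting the explicit identity $\partial u(z) = \xi^n$ together with the series (\ref{Hpolar}) for $H$. Since $u$ is real valued, its differential pulls back to the hodographic plane as
$$d\tilde u = u_x\, d\tilde x + u_y\, d\tilde y = 2\Re\bigl(\xi^n\, dH\bigr).$$
I will compute $d\tilde u$ termwise in polar coordinates $\xi = re^{i\theta}$ and identify it with the differential of the candidate series
$$V(\xi) = 4\sum_{k=n+1}^\infty \mu_k\, r^{n+\lambda_k}\Re\{A_k e^{ik\theta}\}.$$

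Differentiating (\ref{Hpolar}) termwise, the factor $e^{-in\theta}$ in $H$ cancels against $\xi^n = r^n e^{in\theta}$, yielding
$$\xi^n H_r = \sum_k \lambda_k r^{n+\lambda_k-1}\varphi_k(\theta),\qquad \xi^n H_\theta = \sum_k r^{n+\lambda_k}\bigl[i(k-n)A_k e^{ik\theta} - i(n+k)\varepsilon_k\overline{A_k}e^{-ik\theta}\bigr].$$
Taking real parts and using the symmetries $\Re(\overline{A_k}e^{-ik\theta}) = \Re(A_k e^{ik\theta})$ and $\Im(\overline{A_k}e^{-ik\theta}) = -\Im(A_k e^{ik\theta})$, I obtain
$$\tilde u_r = 2\sum_k \lambda_k(1+\varepsilon_k) r^{n+\lambda_k-1}\Re(A_k e^{ik\theta}),\qquad \tilde u_\theta = -2\sum_k \bigl[(k-n)+(n+k)\varepsilon_k\bigr]r^{n+\lambda_k}\Im(A_k e^{ik\theta}).$$
Matching $\tilde u_r$ with $V_r = 4\sum_k \mu_k(n+\lambda_k)r^{n+\lambda_k-1}\Re(A_k e^{ik\theta})$ forces $\mu_k = \lambda_k(1+\varepsilon_k)/(2(n+\lambda_k))$; substituting $1+\varepsilon_k = 2(\lambda_k+n)/(\lambda_k+n+k)$ from (\ref{eps-lamb}) yields $\mu_k = \lambda_k/(\lambda_k+n+k)$ as claimed. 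A short direct calculation gives the algebraic identity $(k-n)+(n+k)\varepsilon_k = 2k\lambda_k/(\lambda_k+n+k)$, which is exactly what is needed so that $\tilde u_\theta$ matches $V_\theta$ with the same $\mu_k$. Since $\tilde u - u(0)$ and $V$ both vanish at the origin and have identical differentials on a connected neighborhood of $0$, they agree.

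The bound $0 \le \mu_k < 1 - n/k$ is then a direct reformulation of (\ref{eps-lamb bound}): nonnegativity uses $\lambda_k > 0$, while the upper bound, after clearing positive denominators, is equivalent to $n\lambda_k < k^2-n^2$, i.e.\ to $\lambda_k < (k^2-n^2)/n$. The main technical hurdle I anticipate is justifying the termwise differentiation of (\ref{Hformula}); since $\lambda_k \sim k\sqrt{p-1}$ by (\ref{eps-lamb}), the factors produced by differentiation grow only linearly in $k$, and Cauchy--Schwarz combined with (\ref{sumAk}) gives uniform convergence of the differentiated series on compact subsets of the neighborhood of $0$ on which (\ref{Hformula}) converges.
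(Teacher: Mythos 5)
Your proof is correct and follows essentially the same route as the paper: compute $\tilde u_r = 2r^n\,\Re\{e^{in\theta}H_r\}$ termwise from $\partial u=\xi^n$ and the series for $H$, integrate in $r$, and read off $\mu_k$ from $1+\varepsilon_k = 2(\lambda_k+n)/(\lambda_k+n+k)$, with the bound on $\mu_k$ reduced to (\ref{eps-lamb bound}). Your extra verification of $\tilde u_\theta$ and the convergence justification via (\ref{sumAk}) are sound additions the paper omits, though the angular check is not strictly needed since integrating the radial derivative from $r=0$ together with $\tilde u(0)=u(0)$ already determines $\tilde u$.
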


\begin{proof}
Using (\ref{polargrad}), we compute $\tilde{u}_r$:
\begin{equation*}
    \tilde{u}_r = (u\circ H)_r =  u_x \tilde{x}_r + u_y \tilde{y}_r = 2r^n \brck{ \tilde{x}_r \cos(n\theta) - \tilde{y}_r \sin(n\theta) }
\end{equation*}
the expression in brackets being equal to $\Re\set{ e^{in\theta}
H_r}$. Replacing (\ref{Hpolar}) in the previous equation we get
$$
    \tilde{u}_r  =  2r^n \Re\set{ e^{in\theta} H_r} =
     4 \sum_{k=n+1}^\infty \lambda_k (1+\varepsilon_k) r^{n+\lambda_k-1} \Re\set{A_k
    e^{ik\theta}}.
$$
Integrate with respect to $r$ to complete the proof. The bound on
$\mu_k$ follows from (\ref{eps-lamb bound}).
\end{proof}

\begin{remark}
From now on, we can assume without loss of generality that $u(0) = 0$.
\end{remark}

\section{Quantitative injectivity estimates for the first term of $H$}

We define the mapping $\A(\xi)$ as the first term in the power series expansion of $H$:
\begin{equation}\label{Atilde}
            \A(\xi)  =  \brck{ A_{n+1} \pare{\frac{\xi}{\abs{\xi}}}^{n+1} + \varepsilon_{n+1} \overline{A_{n+1}}
    \pare{\frac{\overline{\xi}}{\abs{\xi}}}^{n+1}}
    \pare{\frac{\xi}{\abs{\xi}}}^{-n} \abs{\xi}^{\lambda_{n+1}}
\end{equation}
We define $\TU(\xi)$ to be the first term in the power series
expansion of $\tilde{u}$,
\begin{equation}\label{TUgothic}
    \TU(\xi) = 4\mu_{n+1} \abs{\xi}^{n+\lambda_{n+1}} \Re\set{A_{n+1} \pare{\frac{\xi}{\abs{\xi}}}^{n+1} }.
\end{equation}

For simplicity, we will use hereafter the notations $a \lesssim b$ (
resp. $a \approx b$) to indicate that $a \leq C b $ (resp. $C^{-1}a
\leq b \leq Ca$) for some positive constant $C$ independent of $a$
and $b$.

\begin{lem}\label{LEMMA03}
The following estimates hold in a neighborhood of $\xi = 0$:

\vspace{-0.2cm}
 \begin{eqnarray}
                \abs{\widetilde{u}(\xi) -\widetilde{\U}(\xi ) } & \lesssim &
                \abs{\xi}^{n+\lambda_{n+2}} \vspace{0.35cm} \label{est-error}\\
                \abs{H(\xi) - \A(\xi)} & \lesssim &
                \abs{\xi}^{\lambda_{n+2}}\vspace{0.35cm}\label{est-H-A}\\
                \abs{\A(\xi)} \approx \abs{H(\xi)} & \approx &
        \abs{\xi}^{\lambda_{n+1}} \label{estimates}
\end{eqnarray}
\end{lem}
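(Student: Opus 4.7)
The plan is to prove the three estimates in order (\ref{est-error}), (\ref{est-H-A}), (\ref{estimates}), exploiting that $\lambda_k$ is strictly increasing in $k$ and that the coefficients $A_k$ satisfy the summability condition (\ref{sumAk}). The third estimate $|\A(\xi)|\approx |\xi|^{\lambda_{n+1}}$ can be read off directly from (\ref{Atilde}), and then $|H(\xi)|\approx |\xi|^{\lambda_{n+1}}$ follows from (\ref{est-H-A}) by the triangle inequality (for $|\xi|$ small enough).

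For (\ref{est-error}) and (\ref{est-H-A}), I would write the differences $\tilde u(\xi)-\TU(\xi)$ and $H(\xi)-\A(\xi)$ as the tails of the expansions given by Proposition \ref{HODOREP} and (\ref{Hformula}) starting from $k=n+2$. Each term of these tails is bounded in absolute value by a constant multiple of $|A_k|\,|\xi|^{n+\lambda_k}$, respectively $|A_k|\,|\xi|^{\lambda_k}$, because $|\varepsilon_k|<1$ and $\mu_k<1$ by (\ref{eps-lamb bound}) and Proposition \ref{HODOREP}. After factoring out $|\xi|^{n+\lambda_{n+2}}$ (resp.\ $|\xi|^{\lambda_{n+2}}$), what remains is the series $\sum_{k\geq n+2}|A_k|\,|\xi|^{\lambda_k-\lambda_{n+2}}$, which I bound by Cauchy--Schwarz:
\begin{equation*}
\sum_{k\geq n+2}|A_k|\,|\xi|^{\lambda_k-\lambda_{n+2}}
\;\leq\;\Bigl(\sum_{k\geq n+2}k\,|A_k|^2\Bigr)^{1/2}\Bigl(\sum_{k\geq n+2}\frac{|\xi|^{2(\lambda_k-\lambda_{n+2})}}{k}\Bigr)^{1/2}.
\end{equation*}
The first factor is finite by (\ref{sumAk}). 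For the second, from (\ref{eps-lamb}) one sees that $\lambda_k$ grows like $k\sqrt{p-1}$, so $\lambda_k-\lambda_{n+2}\geq c(p,n)(k-n-2)$ for some constant $c(p,n)>0$ and all $k\geq n+2$; hence for $|\xi|\leq r_0<1$ the second factor is bounded by a convergent geometric-type sum, uniformly in $\xi$.

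The lower bound on $|\A(\xi)|$ is the only delicate point. Writing $\xi=re^{i\theta}$ and $A_{n+1}=|A_{n+1}|e^{i\alpha}$, and using that $\varepsilon_{n+1}\in\mathbb{R}$ by (\ref{eps-lamb}), formula (\ref{Atilde}) becomes
\begin{equation*}
\A(re^{i\theta})=|A_{n+1}|\bigl(e^{i\phi}+\varepsilon_{n+1}e^{-i\phi}\bigr)e^{-in\theta}\,r^{\lambda_{n+1}},
\qquad \phi=\alpha+(n+1)\theta.
\end{equation*}
Since the bound (\ref{eps-lamb bound}) gives $|\varepsilon_{n+1}|<\frac{1}{2n+1}<1$, the triangle inequality yields
\begin{equation*}
(1-|\varepsilon_{n+1}|)\,|A_{n+1}|\;\leq\;|e^{i\phi}+\varepsilon_{n+1}e^{-i\phi}|\,|A_{n+1}|\;\leq\;(1+|\varepsilon_{n+1}|)\,|A_{n+1}|,
\end{equation*}
so $|\A(\xi)|\approx|\xi|^{\lambda_{n+1}}$ with constants depending only on $n$, $p$, and $|A_{n+1}|$. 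Combining this with (\ref{est-H-A}) and $\lambda_{n+2}>\lambda_{n+1}$ gives $|H(\xi)|\approx|\xi|^{\lambda_{n+1}}$ in a sufficiently small neighborhood of $0$.

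The main obstacle is the tail estimate: one cannot naively bound $\sum|A_k|$, since only $\sum k|A_k|^2$ is known to converge. The Cauchy--Schwarz/geometric-decay argument above is what makes the tail summable, and it relies crucially on the quadratic growth of $\lambda_k$ in $k$ built into (\ref{eps-lamb}).
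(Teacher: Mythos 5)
Your proof is correct and follows essentially the same route as the paper: both reduce all three estimates to the tail bound $\sum_{k\geq n+2}|\xi|^{\lambda_k}=O(|\xi|^{\lambda_{n+2}})$, which rests on the linear lower bound $\lambda_k-\lambda_{n+2}\geq C(p)(k-(n+2))$, and both get $|\A(\xi)|\approx|\xi|^{\lambda_{n+1}}$ directly from (\ref{Atilde}) using $|\varepsilon_{n+1}|<1$. The only differences are cosmetic: the paper dispenses with Cauchy--Schwarz by simply noting that (\ref{sumAk}) forces the sequence $(A_k)$ to be bounded, which already suffices once the geometric decay of $|\xi|^{\lambda_k-\lambda_{n+2}}$ is in hand; and your closing remark about the ``quadratic growth'' of $\lambda_k$ is a slip --- $\lambda_k$ grows linearly, like $k\sqrt{p-1}$, as you yourself observe earlier, and linear growth is exactly what is needed.
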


\begin{proof}

From (\ref{sumAk}), ( \ref{eps-lamb bound}) and the fact that $0\leq
\mu_k < 1$  we get in particular that the sequence $(A_k )$ is
bounded and that $|\epsilon_k | < 1 $ for all $k$. Since $(\lambda_k
)$ is increasing, (\ref{est-error}), (\ref{est-H-A}) and
(\ref{estimates}) follow from the estimate
\begin{equation}\label{estimaxi}
\sum_{k=n+2}^{\infty}|\xi |^{\lambda_k} = O( |\xi |^{\lambda_{n+2}})
\end{equation}
Now an elementary computation shows that there is $C= C(p) >0$ such
that  $\displaystyle \lambda_k - \lambda_{n+2} \geq C(k - (n+2)) $
for all $k\geq n+2$.This implies (\ref{estimaxi}) and proves the
lemma.
\end{proof}





Now, we study the behavior of $\A$ and we give an injectivity
estimate. For this purpose, we will need the help of the following
elementary lemma whose proof is omitted.

\begin{lem}
Let $\rho >0$, $\lambda > 0$ and $t\in\R$. Then for all $k\in \N$,
                \begin{equation}\label{ineq-A}
                         \abs{\rho e^{ikt}-1} \leq k \abs{\rho e^{it}-1}.
                \end{equation}
Furthermore, if $\Lambda> 1$ and if $\Lambda^{-1} \leq \rho \leq
\Lambda$ then there is a constant $C = C(\lambda,\Lambda) > 0$ such
that
                \begin{equation}\label{ineq-B}
                        \abs{\rho^\lambda e^{it} - 1} \geq C \rho^{\lambda-1} \abs{\rho e^{it} - 1}.
                \end{equation}

\end{lem}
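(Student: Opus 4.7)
The plan is to square both moduli in each inequality and reduce to elementary facts: for (\ref{ineq-A}) to the classical sine bound, and for (\ref{ineq-B}) to the boundedness of $\rho^{\lambda-1}$ on the interval $[\Lambda^{-1},\Lambda]$ together with the mean value theorem.

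For (\ref{ineq-A}), squaring both sides and using $\abs{\rho e^{i\alpha}-1}^2 = \rho^2 - 2\rho \cos\alpha + 1$ turns the inequality into
$$(k^2-1)(\rho^2+1) \ge 2\rho\bigl(k^2\cos t - \cos(kt)\bigr).$$
If the bracket on the right is non-positive there is nothing to prove. Otherwise AM--GM gives $\rho^2+1 \ge 2\rho$, and it remains to check $k^2 - 1 \ge k^2 \cos t - \cos(kt)$, equivalently $k^2 \sin^2(t/2) \ge \sin^2(kt/2)$, which is the standard inequality $\abs{\sin(k\alpha)}\le k\abs{\sin\alpha}$ for $k\in\mathbb N$ (a one-line induction via the addition formula).

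For (\ref{ineq-B}) I would split
$$\abs{\rho^\lambda e^{it}-1}^2 = (\rho^\lambda-1)^2 + 4\rho^\lambda \sin^2(t/2), \qquad \abs{\rho e^{it}-1}^2 = (\rho-1)^2 + 4\rho \sin^2(t/2),$$
and bound each summand on the left below by a constant multiple of $\rho^{2\lambda-2}$ times the corresponding summand on the right. For the radial summand, the mean value theorem applied to $s\mapsto s^\lambda$ on the interval between $1$ and $\rho$ yields $\abs{\rho^\lambda - 1} \ge c(\lambda,\Lambda)\abs{\rho-1}$, because the point at which the derivative is evaluated lies in $[\Lambda^{-1},\Lambda]$; combined with the two-sided bound on $\rho^{2\lambda-2}$ on that interval, this gives the desired radial comparison after shrinking the constant. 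For the angular summand the required estimate reduces to $1\ge C^2 \rho^{\lambda-1}$, which again holds for $C$ small because $\rho^{\lambda-1}$ is bounded above on $[\Lambda^{-1},\Lambda]$. Taking the smaller of the two constants produces (\ref{ineq-B}).

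There is no real obstacle here: both arguments amount to elementary estimates. The only points to keep in mind are the sign split in (a) needed before invoking AM--GM in the useful direction, and the need in (b) to track constants coming from $\lambda$ and $\Lambda$ in each of the radial and angular pieces separately and then take their minimum.
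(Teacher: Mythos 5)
The paper explicitly omits the proof of this lemma (``elementary lemma whose proof is omitted''), so there is no argument of the authors to compare yours against. Your proof is correct and complete: the reduction of (\ref{ineq-A}) to $k^2\sin^2(t/2)\ge \sin^2(kt/2)$ after the sign split and AM--GM is valid, and the summand-by-summand comparison for (\ref{ineq-B}), using the mean value theorem for the radial part and the two-sided bound on $\rho^{\lambda-1}$ for the angular part, correctly yields a constant depending only on $\lambda$ and $\Lambda$.
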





\begin{lem}\label{LEMMA02-1}
The mapping $\A : \C \to \C$ is bijective and  satisfies
\begin{equation}\label{ineq-Ca}
        \abs{\A(\xi) - \A(\zeta)} \geq C \abs{ \abs{\xi}^{\lambda_{n+1}-1} \xi - \abs{\zeta}^{\lambda_{n+1}-1} \zeta }
\end{equation}
where $C=(1-(2n+1)\abs{\varepsilon_{n+1}})\abs{A_{n+1}}$.
\end{lem}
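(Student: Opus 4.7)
The plan is to rewrite $\A(\xi)$ in polar form so that the elementary lemma just proven applies directly. With $\xi = re^{i\theta}$, formula (\ref{Atilde}) collapses to
\begin{equation*}
\A(\xi) = A_{n+1}\,r^{\lambda_{n+1}}e^{i\theta} + \varepsilon_{n+1}\overline{A_{n+1}}\,r^{\lambda_{n+1}}e^{-i(2n+1)\theta} = A_{n+1}\,g(\xi) + \varepsilon_{n+1}\overline{A_{n+1}}\,F(g(\xi)),
\end{equation*}
where $g(\xi) = \abs{\xi}^{\lambda_{n+1}-1}\xi$ is a homeomorphism of $\C$ and $F(\eta) = \overline{\eta}^{\,2n+1}/\abs{\eta}^{2n}$ (set $F(0)=0$). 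The advantage of this splitting is that $g(\xi)-g(\zeta)$ is precisely the quantity on the right of (\ref{ineq-Ca}), so (\ref{ineq-Ca}) will reduce to a Lipschitz estimate on $F$.

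The core step is therefore to show
\begin{equation*}
\abs{F(\eta_1) - F(\eta_2)} \leq (2n+1)\,\abs{\eta_1 - \eta_2},\qquad \eta_1,\eta_2\in\C.
\end{equation*}
I would exploit the equivariance $F(e^{i\alpha}\eta) = e^{-i(2n+1)\alpha}F(\eta)$: multiplying $\eta_1$ and $\eta_2$ by a common $e^{i\alpha}$ preserves both $\abs{F(\eta_1)-F(\eta_2)}$ and $\abs{\eta_1-\eta_2}$, so I may assume $\eta_2 = s \geq 0$ is real. Writing $\eta_1 = \rho e^{i\alpha}$ with $\rho\geq 0$, the case $s=0$ is trivial because $\abs{F(\eta_1)}=\abs{\eta_1}$; for $s>0$, dividing by $s$ and using $\abs{\rho e^{-i(2n+1)\alpha} - s} = \abs{\rho e^{i(2n+1)\alpha} - s}$, the inequality reduces to $\abs{(\rho/s)e^{i(2n+1)\alpha}-1} \leq (2n+1)\abs{(\rho/s)e^{i\alpha}-1}$, which is exactly (\ref{ineq-A}) with $k = 2n+1$ and $t = \alpha$.

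Once the Lipschitz bound is in hand, the reverse triangle inequality gives
\begin{equation*}
\abs{\A(\xi)-\A(\zeta)} \geq \bigl(\abs{A_{n+1}} - (2n+1)\abs{\varepsilon_{n+1}}\abs{A_{n+1}}\bigr)\abs{g(\xi)-g(\zeta)},
\end{equation*}
which is (\ref{ineq-Ca}) with the claimed constant $C$. From (\ref{eps-lamb bound}) we have $\abs{\varepsilon_{n+1}} < 1/(2n+1)$, so $C>0$ and injectivity of $\A$ is immediate. For surjectivity, setting $\zeta=0$ in (\ref{ineq-Ca}) yields $\abs{\A(\xi)} \geq C\abs{\xi}^{\lambda_{n+1}}$, so $\A$ is proper; combined with continuity and injectivity, invariance of domain makes $\A(\C)$ open in $\C$ while properness makes it closed, forcing $\A(\C)=\C$. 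The only delicate point is the Lipschitz estimate for $F$, but once the right reduction (common rotation placing $\eta_2$ on the positive real axis) is identified, it becomes a single invocation of (\ref{ineq-A}); everything else is book-keeping.
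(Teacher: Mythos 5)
Your proof is correct, and the core estimate is essentially the paper's argument in different clothing: your reduction of the Lipschitz bound for $F(\eta)=\overline{\eta}^{\,2n+1}/\abs{\eta}^{2n}$ (rotate so that $\eta_2=s>0$, divide by $s$, invoke (\ref{ineq-A}) with $k=2n+1$) is exactly the paper's application of (\ref{ineq-A}) with $\rho=\abs{\xi/\zeta}^{\lambda}$ and $e^{it}=\frac{\xi/\zeta}{\abs{\xi/\zeta}}$ followed by multiplication by $\abs{\zeta}^{\lambda}$, and both then finish with the reverse triangle inequality and the bound $\abs{\varepsilon_{n+1}}<1/(2n+1)$ from (\ref{eps-lamb bound}). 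Where you genuinely diverge is surjectivity: the paper argues explicitly, writing $\A(re^{i\theta})=r^{\lambda}m(\theta)e^{if(\theta)}$ with $f(\theta)=\theta+\arg(1+\varepsilon e^{-i2(n+1)\theta})$ and using the intermediate value theorem ($f(0)=0$, $f(2\pi)=2\pi$) to hit any prescribed argument, then solving $r^{\lambda}m(\theta)=s$ for the modulus; you instead deduce properness from (\ref{ineq-Ca}) with $\zeta=0$ and combine it with injectivity and invariance of domain to get that $\A(\C)$ is open, closed and nonempty. Your route is shorter and avoids any computation with $m(\theta)$ and $f(\theta)$, at the cost of importing invariance of domain, a nontrivial topological theorem; the paper's construction is more elementary and, as a by-product, exhibits the polar description of $\A$ that is reused later (in the description of the hodographic disc $\widetilde{D_R}$ and in Lemma \ref{LEMMA-UGOTHIC}). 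Both are complete; only note that your application of (\ref{ineq-A}) needs the trivial separate check when $\rho=0$, i.e.\ $\eta_1=0$, which your $\abs{F(\eta)}=\abs{\eta}$ observation already covers.
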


\begin{proof}

First, we observe that from (\ref{eps-lamb bound}) for $k= n+1$ we
obtain that $\displaystyle 0 < \lambda_{n+1} < 2 + \frac{1}{n}$ and
that
\begin{equation} \label{epsi n+1}
\abs{\varepsilon_{n+1}}
<\frac{1}{2n+1}
\end{equation}
We  show first that $\A$ is surjective.  We write $\lambda \equiv
\lambda_{n+1}$, $\epsilon \equiv \epsilon_{n+1}$ and $A \equiv
A_{n+1}$. Then
$$
\A ( r e^{i\theta}) = r^{\lambda} e^{i\theta}\big ( A + \epsilon
\overline{A} e^{-i2(n+1)\theta} \big )
$$
Assume, for simplicity, that $A = 1$. Then we can write
$$
\A(re^{i\theta}) = r^{\lambda} |1 + \epsilon \, e^{-i2(n+1)\theta}|
e^{if(\theta )}
$$
where $\displaystyle f(\theta ) = \theta + \arg (1 + \epsilon \,
e^{-i2(n+1)\theta} )$ and

\begin{equation}\label{mperiodic}
        m(\theta) = |1 + \epsilon \, e^{-i2(n+1)\theta}| = \sqrt{ 1 + \varepsilon^2 + 2\varepsilon\cos(2(n+1)\theta) }.
\end{equation}

 To prove that $\A$ is surjective, let $w
= se^{it}\in \C$ such that $w\neq 0$ (if $w = 0$ it is obvious that
$\A(0) = 0$). Since $f(0)=0$ and $f(2\pi ) = 2\pi $, by continuity
we can pick $k\in \Z$ and $\theta \in [0, 2\pi ]$ such that $t +
2k\pi \in [0, 2\pi ]$ and $f(\theta ) = t + 2k\pi $. Then
$\displaystyle e^{if(\theta )} = e^{it}$. For that $\theta$, choose
$r>0$ so that
$$
\displaystyle r^{\lambda} m(\theta ) = s
$$
Then we have shown that $\A(re^{i\theta} ) = w$ so the
surjectiveness of $\A$ follows.


To finish the proof of the lemma, it is enough to prove
(\ref{ineq-Ca}), which is a quantitative form of injectiveness. By
(\ref{Atilde}),
\begin{eqnarray}
        \abs{\A(\xi)-\A(\zeta)} & \geq &  \abs{A_{n+1}} \abs{\abs{\xi}^\lambda \frac{\xi}{\abs\xi} - \abs{\zeta}^\lambda \frac{\zeta}{\abs{\zeta}}} - \nonumber \\ & - & \abs{A_{n+1}} \abs{\varepsilon} \abs{\abs{\xi}^\lambda \pare{\frac{\overline{\xi}}{\abs{\xi}}}^{2n+1} - \abs{\zeta}^\lambda \pare{\frac{\overline{\zeta}}{\abs{\zeta}}}^{2n+1}}  \label{ineq03}
\end{eqnarray}
Now apply (\ref{ineq-A}) with $\rho =
\abs{\displaystyle\frac{\xi}{\zeta}}^\lambda$,
$e^{it}=\displaystyle\frac{\xi/\zeta}{\abs{\xi/\zeta}}$ and
$k=2n+1$, and multiply both sides of the inequality by
$\abs{\zeta}^\lambda$. Then
\begin{equation*}
        \abs{ \abs{\xi}^\lambda \pare{\frac{\xi}{\abs{\xi}}}^{2n+1} - \abs{\zeta}^\lambda \pare{\frac{\zeta}{\abs{\zeta}}}^{2n+1} } \leq (2n+1) \abs{ \abs{\xi}^\lambda \frac{\xi}{\abs{\xi}} - \abs{\zeta}^\lambda \frac{\zeta}{\abs{\zeta}} }.
\end{equation*}
Replacing this expression in (\ref{ineq03}) we obtain
\begin{equation*}
        \abs{\A(\xi) - \A(\zeta)} \geq \abs{A_{n+1}} (1-(2n+1)\abs{\varepsilon}) \abs{ \abs{\xi}^\lambda \frac{\xi}{\abs{\xi}} - \abs{\zeta}^\lambda \frac{\zeta}{\abs{\zeta}} }.
\end{equation*}
so the proof is finished.
\end{proof}

\begin{lem}\label{LEMMA02-2}
Let $\Lambda >1$. Then there is a constant
$C=C(n,p,\Lambda,\abs{A_{n+1}}) >0$ such that for any $\xi$, $\zeta
\in \C$ with $\displaystyle \Lambda^{-1}\abs{\zeta} \leq \abs{\xi}
\leq \Lambda \abs{\zeta}$ we have
\begin{equation}\label{ineq-Cb}
        \abs{\A(\xi) - \A(\zeta)} \geq C \abs{\xi}^{\lambda_{n+1}-1} \abs{\xi - \zeta}.
\end{equation}
\end{lem}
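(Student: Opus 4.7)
The plan is to use Lemma \ref{LEMMA02-1} as the starting point and then estimate the resulting expression by reducing to inequality (\ref{ineq-B}) from the elementary lemma. Write $\lambda = \lambda_{n+1}$. Lemma \ref{LEMMA02-1} already gives
$$|\A(\xi) - \A(\zeta)| \gtrsim \Big| |\xi|^{\lambda-1}\xi - |\zeta|^{\lambda-1}\zeta \Big| = \Big| |\xi|^{\lambda}\tfrac{\xi}{|\xi|} - |\zeta|^{\lambda}\tfrac{\zeta}{|\zeta|} \Big|,$$
so everything reduces to a lower bound on the right-hand side in terms of $|\xi|^{\lambda-1}|\xi-\zeta|$. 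The constants appearing in Lemma \ref{LEMMA02-1} only depend on $n$, $p$, and $|A_{n+1}|$ (since $|\varepsilon_{n+1}|<1/(2n+1)$ by \eqref{epsi n+1}), so combining with an estimate depending on $\Lambda$ will produce a constant of the promised form.

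The second step is to normalize by factoring out $|\zeta|^{\lambda}\zeta/|\zeta|$. Setting $\rho = |\xi|/|\zeta|$ and $e^{it} = (\xi/|\xi|)(\zeta/|\zeta|)^{-1}$, a direct computation gives
$$\Big| |\xi|^{\lambda}\tfrac{\xi}{|\xi|} - |\zeta|^{\lambda}\tfrac{\zeta}{|\zeta|}\Big| = |\zeta|^{\lambda}\,|\rho^{\lambda}e^{it}-1|,$$
while the same parametrization yields $|\rho e^{it} - 1| = |\xi - \zeta|/|\zeta|$. The hypothesis $\Lambda^{-1}|\zeta| \le |\xi| \le \Lambda|\zeta|$ is precisely the condition $\Lambda^{-1} \le \rho \le \Lambda$ required to invoke \eqref{ineq-B}, which then gives
$$|\rho^{\lambda}e^{it}-1| \ge C(\lambda,\Lambda)\,\rho^{\lambda-1}|\rho e^{it}-1|.$$

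Assembling the pieces, one obtains
$$\Big| |\xi|^{\lambda}\tfrac{\xi}{|\xi|} - |\zeta|^{\lambda}\tfrac{\zeta}{|\zeta|}\Big| \ge C\,|\zeta|^{\lambda}\rho^{\lambda-1}\tfrac{|\xi-\zeta|}{|\zeta|} = C\,|\zeta|^{\lambda-1}\rho^{\lambda-1}|\xi-\zeta| = C\,|\xi|^{\lambda-1}|\xi-\zeta|,$$
using $|\zeta|^{\lambda-1}\rho^{\lambda-1} = |\xi|^{\lambda-1}$. Composing this with the bound from Lemma \ref{LEMMA02-1} yields \eqref{ineq-Cb} with a constant depending on $n$, $p$, $\Lambda$, and $|A_{n+1}|$, as claimed.

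The argument is essentially a bookkeeping exercise, and there is no real obstacle: the only subtle point is recognizing that the hypothesis on the ratio $|\xi|/|\zeta|$ is tailored exactly to the assumption in \eqref{ineq-B}, and that after applying \eqref{ineq-B} the factor $\rho^{\lambda-1}$ combined with $|\zeta|^{\lambda-1}$ collapses cleanly to $|\xi|^{\lambda-1}$ (one could equally well express the bound in terms of $|\zeta|^{\lambda-1}$ at the cost of an extra $\Lambda^{|\lambda-1|}$ factor).
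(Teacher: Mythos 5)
Your proposal is correct and follows the paper's own argument exactly: invoke Lemma \ref{LEMMA02-1}, then apply \eqref{ineq-B} with $\rho=\abs{\xi/\zeta}$ and $e^{it}=(\xi/\zeta)/\abs{\xi/\zeta}$, multiply by $\abs{\zeta}^{\lambda}$, and combine. The algebra checks out, so there is nothing to add.
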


\begin{proof}
Apply (\ref{ineq-B}) with $\rho =
\abs{\displaystyle\frac{\xi}{\zeta}}$ and
$e^{it}=\displaystyle\frac{\xi/\zeta}{\abs{\xi/\zeta}}$ and
multilply by $\abs{\zeta}^\lambda$ to obtain
\begin{equation}\label{ineq05}
        \abs{ \abs{\xi}^\lambda \frac{\xi}{\abs{\xi}} - \abs{\zeta}^\lambda \frac{\zeta}{\abs{\zeta}} } \geq C \abs{\xi}^{\lambda-1} \abs{ \xi - \zeta }
\end{equation}
Then the lemma follows from (\ref{ineq-Ca}) together with
(\ref{ineq05}).
\end{proof}

\section{The perturbation method}

Given $\xi$ in the hodographic plane, set $z = H(\xi )$, $\zeta =
\A^{-1}(z)$ and $w = H(\zeta )$. Then
$$
\xi = \chi (z) \, \, , \, \, \, \, \, \zeta = \chi (w) =
\A^{-1}(H(\xi ))
$$


\

 Since $|z| = |\A (\zeta ) | \approx |H(\zeta )| = |w|$ by
(\ref{estimates}) it follows from quasiconformality (\cite{Ahlfors})
that $\abs{\xi} = |\chi (z) |  \approx  | \chi (w)| = \abs{\zeta}$.
We recall that $u(z)=(\tilde{u}\circ\chi)(z)$ and that $\TU$ is
given by (\ref{TUgothic}). Following \cite{LIND-MAN}, define the
functions
\begin{equation}\label{Ugothic}
    \U(z) = (\TU \circ \A^{-1})(z)
\end{equation}

\begin{lem}\label{LEMMA04}
Let $\Lambda >1$. There is a constant $C = C(n,p, \Lambda ,
|A_{n+1}| )>0$ such that for any $\xi$, $\zeta \in \C$ with
$\Lambda^{-1} |\zeta | \leq |\xi | \leq \Lambda |\zeta |$ then
\begin{equation}\label{ineq-D}
        \abs{\TU(\xi) - \TU(\zeta)} \leq C |\xi |^n |\A (\xi ) - \A (\zeta )|.
\end{equation}
\end{lem}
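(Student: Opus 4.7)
The plan is to prove the intermediate estimate $|\TU(\xi) - \TU(\zeta)| \lesssim |\xi|^{n+\lambda_{n+1}-1}|\xi - \zeta|$ and then convert $|\xi - \zeta|$ into $|\A(\xi) - \A(\zeta)|$ via Lemma~\ref{LEMMA02-2}, which can be rewritten as $|\xi - \zeta| \leq C|\xi|^{1-\lambda_{n+1}}|\A(\xi) - \A(\zeta)|$. The exponents $n+\lambda_{n+1}-1$ and $1-\lambda_{n+1}$ then combine to give exactly $|\xi|^n$, as required.

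To reach the intermediate estimate I would write $\xi = re^{i\theta}$, $\zeta = se^{i\phi}$, and decompose
\begin{equation*}
\TU(\xi) - \TU(\zeta) = 4\mu_{n+1}(r^{n+\lambda_{n+1}} - s^{n+\lambda_{n+1}})\Re\set{A_{n+1} e^{i(n+1)\theta}} + 4\mu_{n+1}\, s^{n+\lambda_{n+1}}\Re\set{A_{n+1}(e^{i(n+1)\theta} - e^{i(n+1)\phi})}
\end{equation*}
into a radial and an angular piece. The radial piece is bounded by $C r^{n+\lambda_{n+1}-1}|r-s| \leq C r^{n+\lambda_{n+1}-1}|\xi-\zeta|$ via the mean value theorem applied to $t \mapsto t^{n+\lambda_{n+1}}$, which gives a uniform constant because $r \approx s$. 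For the angular piece I would first apply (\ref{ineq-A}) with $\rho = 1$ to get $|e^{i(n+1)\theta} - e^{i(n+1)\phi}| \leq (n+1)|e^{i\theta} - e^{i\phi}|$, and then use the identity $\xi - \zeta = (r-s)e^{i\theta} + s(e^{i\theta} - e^{i\phi})$ to deduce $s|e^{i\theta} - e^{i\phi}| \leq 2|\xi-\zeta|$. The angular piece is then also $\lesssim r^{n+\lambda_{n+1}-1}|\xi-\zeta|$, since the factor $s^{n+\lambda_{n+1}}/s$ is comparable to $r^{n+\lambda_{n+1}-1}$.

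I do not anticipate a serious obstacle: the whole argument is a direct computation once Lemma~\ref{LEMMA02-2} is in hand. The main thing to keep track of is that the hypothesis $|\xi|\approx|\zeta|$ enters at three distinct points, namely the uniformity of the mean value estimate on $t \mapsto t^{n+\lambda_{n+1}}$, the comparability of $1/s$ and $1/r$ in the angular bound, and the applicability of Lemma~\ref{LEMMA02-2} itself; $|A_{n+1}|$ and $\mu_{n+1}$ are absorbed into the implicit constants, which therefore depend only on $n$, $p$, $\Lambda$ and $|A_{n+1}|$, as stated.
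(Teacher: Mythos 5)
Your proof is correct and takes essentially the same route as the paper: the paper's proof consists of the same intermediate bound $\abs{\TU(\xi)-\TU(\zeta)}\leq C\abs{A_{n+1}}\abs{\xi}^{n+\lambda_{n+1}-1}\abs{\xi-\zeta}$ (asserted there as a ``direct computation'') followed by an application of Lemma~\ref{LEMMA02-2}, exactly as you do. Your radial/angular decomposition merely supplies the details that the paper leaves implicit.
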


\begin{proof}
From (\ref{TUgothic}), the fact that $0 \leq \mu_k < 1 $ if $k \geq
n$ and direct computation it follows that
\begin{equation}\label{ineq06}
        \abs{\TU(\xi) - \TU(\zeta)} \leq C |A_{n+1}|\abs{\xi}^{n+\lambda_{n+1}-1} \abs{\xi-\zeta}
\end{equation}
where $C = C(n,\Lambda) > 0$. Then the conclusion follows from Lemma
\ref{LEMMA02-2}.



\end{proof}

\begin{cor}\label{cor}
Let $\xi$ and $\zeta$ be as in the beginning of the section. Then
the following estimate holds in a neighborhood of $\xi = 0$:
\begin{equation}\label{ineqUgot}
 \abs{\TU(\xi) - \TU(\zeta)} \lesssim |\xi |^{n + \lambda_{n+2}}
\end{equation}
\end{cor}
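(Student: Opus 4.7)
The plan is to chain together Lemma \ref{LEMMA04} and the estimate (\ref{est-H-A}) from Lemma \ref{LEMMA03}, using the key identity that $\A(\zeta) = z = H(\xi)$ by the very definition $\zeta = \A^{-1}(z)$.

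First I would verify that the comparability hypothesis of Lemma \ref{LEMMA04} is in force. The paragraph preceding Lemma \ref{LEMMA04} already establishes that $|z| = |\A(\zeta)| \approx |H(\zeta)| = |w|$ by (\ref{estimates}), and then quasiconformality of $\chi$ transfers this to $|\xi| \approx |\zeta|$. Hence there exists $\Lambda > 1$ (depending only on the distortion constant of $\chi$) such that $\Lambda^{-1}|\zeta| \leq |\xi| \leq \Lambda|\zeta|$ in a sufficiently small punctured neighborhood of the origin, which is exactly what Lemma \ref{LEMMA04} requires.

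Next, applying Lemma \ref{LEMMA04} gives
\begin{equation*}
|\TU(\xi) - \TU(\zeta)| \leq C |\xi|^n |\A(\xi) - \A(\zeta)|.
\end{equation*}
Now comes the crucial rewriting: since $\zeta = \A^{-1}(H(\xi))$, one has $\A(\zeta) = H(\xi)$, so
\begin{equation*}
|\A(\xi) - \A(\zeta)| = |\A(\xi) - H(\xi)|,
\end{equation*}
and the bound (\ref{est-H-A}) from Lemma \ref{LEMMA03} yields $|\A(\xi) - H(\xi)| \lesssim |\xi|^{\lambda_{n+2}}$. Combining these two estimates produces
\begin{equation*}
|\TU(\xi) - \TU(\zeta)| \lesssim |\xi|^n \cdot |\xi|^{\lambda_{n+2}} = |\xi|^{n+\lambda_{n+2}},
\end{equation*}
which is precisely (\ref{ineqUgot}).

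There is essentially no obstacle here beyond bookkeeping: the proof is a two-line assembly of results already in hand. The only place where one must be careful is the legitimacy of invoking Lemma \ref{LEMMA04}, which rests on the comparability $|\xi| \approx |\zeta|$; but this is guaranteed by the quasiconformality argument noted in the section, and it holds uniformly on a fixed small neighborhood of the origin, so the implicit constant in (\ref{ineqUgot}) depends only on $n$, $p$, $|A_{n+1}|$ and the distortion of $\chi$.
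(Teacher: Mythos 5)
Your proof is correct and follows exactly the same route as the paper's (which is stated in one line: use $|\xi|\approx|\zeta|$ to invoke Lemma \ref{LEMMA04}, then note $\A(\zeta)=H(\xi)$ and apply (\ref{est-H-A})). You have simply written out the details the paper leaves implicit, including the key identity $\A(\zeta)=z=H(\xi)$.
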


\begin{proof}
Use the fact that $|\xi | \approx |\zeta |$, Lemma \ref{LEMMA04} and
estimate (\ref{est-H-A}).
\end{proof}

Now we are ready to prove the following singular expansion of a
$p$-harmonic function.

\begin{prop}\label{SINGULAREXPANSION}
Let $u$ be a $p$-harmonic function with a singularity of order $n$
at $z=0$ and $u(0)=0$. Then $u$ can be written as
\begin{equation*}
        u(z) = \U(z) + O(\abs{z}^{\frac{n+\lambda_{n+2}}{\lambda_{n+1}}})
\end{equation*}
in a neighborhood of $z=0$.
\end{prop}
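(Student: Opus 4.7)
The plan is to rewrite both $u(z)$ and $\U(z)$ in the hodographic variable and then bound the difference using the estimates already assembled in Lemmas \ref{LEMMA03}--\ref{LEMMA04} and Corollary \ref{cor}. Set $\xi = \chi(z)$ so that $z = H(\xi)$, and $\zeta = \A^{-1}(z)$ as in the beginning of the section. By the definition \eqref{utilde} of $\tilde{u}$ we have $u(z) = \tilde{u}(\xi)$, and by the definition \eqref{Ugothic} of $\U$ we have $\U(z) = (\TU \circ \A^{-1})(z) = \TU(\zeta)$. Thus the quantity to bound is
\begin{equation*}
u(z) - \U(z) \; = \; \tilde{u}(\xi) - \TU(\zeta).
\end{equation*}

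The main step is to insert $\TU(\xi)$ as an intermediate term:
\begin{equation*}
\tilde{u}(\xi) - \TU(\zeta) \; = \; \bigl(\tilde{u}(\xi) - \TU(\xi)\bigr) + \bigl(\TU(\xi) - \TU(\zeta)\bigr).
\end{equation*}
The first bracket is precisely the error made when truncating the hodographic series of $\tilde{u}$ to its leading term, so Lemma \ref{LEMMA03}, specifically \eqref{est-error}, gives $|\tilde{u}(\xi) - \TU(\xi)| \lesssim |\xi|^{n+\lambda_{n+2}}$. The second bracket compares $\TU$ at the two points $\xi$ and $\zeta = \A^{-1}(H(\xi))$, which (because of quasiconformality of $\chi$ and the size estimate \eqref{estimates}) satisfy $|\xi| \approx |\zeta|$. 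This is exactly the situation of Corollary \ref{cor}, yielding $|\TU(\xi) - \TU(\zeta)| \lesssim |\xi|^{n+\lambda_{n+2}}$ as well.

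Combining the two bounds gives $|u(z) - \U(z)| \lesssim |\xi|^{n+\lambda_{n+2}}$, which is an estimate in the hodographic variable. The final step is to translate this back to the physical variable $z$. From \eqref{estimates} we have $|z| = |H(\xi)| \approx |\xi|^{\lambda_{n+1}}$, hence $|\xi| \approx |z|^{1/\lambda_{n+1}}$. Substituting,
\begin{equation*}
|u(z) - \U(z)| \; \lesssim \; |z|^{\frac{n+\lambda_{n+2}}{\lambda_{n+1}}},
\end{equation*}
which is the stated singular expansion.

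There is really no serious obstacle at this point: the delicate injectivity and quasiconformality work has already been done in Lemmas \ref{LEMMA02-1}--\ref{LEMMA04}, so the proof amounts to a clean two-term splitting plus the change of variables $|\xi| \leftrightarrow |z|$. The only thing one must be careful about is to invoke Corollary \ref{cor} only after noting that $\zeta$ and $\xi$ are indeed comparable in modulus (so the $\Lambda$-hypothesis of Lemma \ref{LEMMA04} holds), which in turn is where the quasiconformality of $\chi$ enters.
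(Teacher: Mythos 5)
Your proposal is correct and follows essentially the same route as the paper: the identical splitting $\tilde{u}(\xi)-\TU(\zeta) = \bigl(\tilde{u}(\xi)-\TU(\xi)\bigr)+\bigl(\TU(\xi)-\TU(\zeta)\bigr)$, bounded by (\ref{est-error}) and Corollary \ref{cor} respectively, followed by the conversion $|\xi|\approx|z|^{1/\lambda_{n+1}}$ via (\ref{estimates}). Your remark about first verifying $|\xi|\approx|\zeta|$ before invoking Corollary \ref{cor} is a point the paper handles at the start of Section 4 via quasiconformality, so nothing is missing.
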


\begin{proof}
By  (\ref{utilde}) and (\ref{Ugothic})  we can write
$$
 u(z)-\U(z) = \tilde{u}(\xi)-\TU(\zeta) = \TU(\xi)-\TU(\zeta)+\tilde{u}(\xi) -
 \TU(\xi).
$$

By (\ref{est-error}), (\ref{estimates}) and Corollary \ref{cor}  we
get

\begin{equation}\label{ineq08}
        \abs{u(z) - \U (z)} \lesssim \abs{\xi}^{n+\lambda_{n+2}}
        \approx |H(\xi )|^{\frac{n + \lambda_{n+2}}{\lambda_{n+1}}}
        = |z|^{\frac{n + \lambda_{n+2}}{\lambda_{n+1}}}
\end{equation}
so the proof is finished.

\end{proof}

\section{Proof of Theorem 1}

As before, we will write $\lambda$ and $\varepsilon$ instead of
$\lambda_{n+1}$ and $\varepsilon_{n+1}$, respectively. We can assume
without loss of generality that $A_{n+1}=1$. Then
\begin{equation*}
        \A(re^{i\theta}) = r^\lambda e^{-in\theta} (e^{i(n+1)\theta} + \varepsilon e^{-i(n+1)\theta})
\end{equation*}
and $\abs{\A(re^{i\theta})} = r^\lambda m(\theta)$, where $m(\theta
) $ is given by (\ref{mperiodic}). Furthermore
$$
\TU(re^{i\theta})=4 \mu r^{n+\lambda}\cos((n+1)\theta).
$$
where $\mu = \mu_{n+1}$.
\\

Denote by $D_R = D(0,R)$ the open disc centered at $0$ with radius
$R>0$ and define the "hodographic disc" $\widetilde{D_R}$ as
$\A^{-1}(D_R)$. Then, a point $re^{i\theta}$ of the hodographic
plane belongs to $\widetilde{D_R}$ if and only if
$\abs{\A(re^{i\theta})}<R$. Then, $\widetilde{D_R}$ can be described
, in polar coordinates, as
\begin{equation*}
        \widetilde{D_R} = \set { re^{i\theta} \ : \ r < \Big ( \frac{R}{m(\theta )} \Big)^{1/\lambda}
        }
\end{equation*}





Now we define the function $J(\zeta)$ as the absolute value of the
jacobian of $\A(\zeta)$. Computing $J(\zeta)$ in polar coordinates
we get

\begin{equation}\label{jacob}
        J(re^{i\theta}) = \lambda r^{2(\lambda-1)} \big ( 1 - (2n+1)\varepsilon^2 -2n\varepsilon \cos(2(n+1)\theta) \big ),
\end{equation}
(Observe that, since $\displaystyle |\epsilon | < (2n+1)^{-1}$,  the
expression in the right hand side of (\ref{jacob}) is positive).


\begin{lem}\label{LEMMA-UGOTHIC}
The $p$-harmonic function $\U(z)$ given by (\ref{Ugothic}) satisfies
the following properties, for small enough $R>0$:

 \begin{equation}\label{SU}
                            \sup_{D_R}\U + \inf_{D_R}\U = 0,
                    \end{equation}
 \begin{equation}\label{MU}
                            \int_{D_R} \U = 0.
                    \end{equation}

\end{lem}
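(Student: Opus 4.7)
The plan is to exploit the rotational symmetry $\theta \mapsto \theta + \pi/(n+1)$ in the hodographic plane. Writing $\TU(re^{i\theta}) = 4\mu r^{n+\lambda}\cos((n+1)\theta)$, the angular factor $\cos((n+1)\theta)$ changes sign under this rotation, whereas $m(\theta)$ in (\ref{mperiodic}) and the jacobian $J(re^{i\theta})$ in (\ref{jacob}) depend on $\theta$ only through $\cos(2(n+1)\theta)$, which is invariant under the same rotation. Consequently, both the hodographic disc $\widetilde{D_R} = \{re^{i\theta} : r < (R/m(\theta))^{1/\lambda}\}$ and the density $J$ are invariant, while $\TU$ is anti-invariant under $\zeta \mapsto e^{i\pi/(n+1)}\zeta$.

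For (\ref{SU}), note that since $\A : \C \to \C$ is a bijection (Lemma \ref{LEMMA02-1}) with $\U = \TU \circ \A^{-1}$, we have $\sup_{D_R}\U = \sup_{\widetilde{D_R}}\TU$ and similarly for the infimum. Let $\zeta_0 \in \overline{\widetilde{D_R}}$ be a point where $\TU$ attains its supremum; by invariance of $\widetilde{D_R}$ the point $\zeta_0' = e^{i\pi/(n+1)}\zeta_0$ also belongs to $\overline{\widetilde{D_R}}$, and $\TU(\zeta_0') = -\TU(\zeta_0)$, so $\inf_{\widetilde{D_R}}\TU \leq -\sup_{\widetilde{D_R}}\TU$. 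The reverse inequality is obtained by swapping roles, hence $\sup_{D_R}\U + \inf_{D_R}\U = 0$.

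For (\ref{MU}), I would change variables via $z = \A(\zeta)$, using the bijectivity of $\A$ and formula (\ref{jacob}):
\begin{equation*}
        \int_{D_R}\U(z)\, dA(z) = \int_{\widetilde{D_R}} \TU(\zeta) J(\zeta)\, dA(\zeta).
\end{equation*}
Passing to polar coordinates and evaluating the radial integral, the right hand side becomes
\begin{equation*}
        \frac{4\mu\lambda\, R^{(n+3\lambda)/\lambda}}{n+3\lambda}\int_0^{2\pi} \frac{\cos((n+1)\theta)\,\bigl(1-(2n+1)\varepsilon^2 - 2n\varepsilon\cos(2(n+1)\theta)\bigr)}{m(\theta)^{(n+3\lambda)/\lambda}}\, d\theta.
\end{equation*}
The angular integrand is of the form $\cos((n+1)\theta) \cdot g(\cos(2(n+1)\theta))$, hence is anti-periodic of period $\pi/(n+1)$. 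Since $[0,2\pi]$ consists of $2(n+1)$ such half-periods, pairing adjacent intervals by the substitution $\theta \mapsto \theta + \pi/(n+1)$ shows the integral vanishes.

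The routine parts are the change of variables and the algebraic verification that $\TU$, $\widetilde{D_R}$ and $J$ transform as claimed under $\theta \mapsto \theta + \pi/(n+1)$; the only conceptual step is recognising that this rotation simultaneously preserves the hodographic disc and the jacobian while reversing the sign of $\TU$, which is a direct consequence of the different frequencies $(n+1)$ and $2(n+1)$ appearing in $\TU$ versus in $m$ and $J$.
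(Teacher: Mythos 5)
Your proof is correct and follows essentially the same route as the paper: the authors also reduce (\ref{SU}) to the symmetries of $\widetilde{D_R}$ and prove (\ref{MU}) by the change of variables $z=\A(\zeta)$, passing to polar coordinates, and invoking the symmetry of $m(\theta)$ and $j(\theta)$. Your write-up merely makes explicit the rotation $\theta\mapsto\theta+\pi/(n+1)$ that the paper leaves implicit, which is a correct and complete justification.
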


\begin{proof}
By (\ref{Ugothic}), we need to study the behavior of $\TU(\xi)$ in
$\widetilde{D_R}$. Then, (\ref{SU}) is a direct consequence of the
symmetries of $\widetilde{D_R}$. To show (\ref{MU}), observe that,
by change of variables

\begin{equation}\label{change}
\int_{D_R}\U (z) dz  \, \,  =  \int_{\widetilde{D_R}} \TU (\zeta)
J(\zeta ) d\zeta
\end{equation}
and using polar coordinates in (\ref{change}) we get
\begin{equation}\label{polar}
\int_{D_R}\U (z) dz  =   4\mu \lambda \int_0^{2\pi}\int_0^{r(\theta
)} r^{n +3\lambda -1}\cos ((n+1)\theta ) j(\theta) dr d\theta
\end{equation}
where
$$
r(\theta ) = \Big ( \frac{R}{m(\theta)} \Big )^{1/\lambda} \, \, ,
\, \, \, \, j(\theta ) = 1 - (2n+1)\epsilon^2 -2n\epsilon \cos
(2(n+1)\theta )
$$
and $m(\theta)$ is given by (\ref{mperiodic}). Now (\ref{MU})
follows directly from (\ref{polar}) and the symmetry properties of
$m(\theta )$ and $j(\theta )$.


\end{proof}

\begin{lem}\label{LEMMA06}
The inequality
\begin{equation}\label{lambda2}
    \frac{n + \lambda_{n+2}}{\lambda_{n+1}} > 2
\end{equation}
holds for each $1 < p < \infty$ and each $n \geq 1$.
\end{lem}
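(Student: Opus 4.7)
The plan is to unfold the definition of $\lambda_k$ from (\ref{eps-lamb}) and reduce (\ref{lambda2}) to a clean inequality between square roots. Since $\lambda_{n+1} > 0$ by (\ref{eps-lamb bound}), the stated inequality is equivalent to $n + \lambda_{n+2} > 2\lambda_{n+1}$. Introducing the shorthand $S_k := \sqrt{4k^2(p-1) + n^2(p-2)^2}$, so that $2\lambda_k = S_k - np$, a direct rearrangement reformulates the target as
\[
S_{n+2} + n(p+2) > 2 S_{n+1}.
\]

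Two observations would then close the argument. The first is the pleasant identity $S_n = np$, which falls out of the expansion
\[
4n^2(p-1) + n^2(p-2)^2 = n^2\bigl[4p-4 + p^2 - 4p + 4\bigr] = n^2 p^2.
\]
The second is the convexity of $k \mapsto S_k$ on $[0,\infty)$: writing $S_k = \sqrt{c + dk^2}$ with $c = n^2(p-2)^2 \geq 0$ and $d = 4(p-1) > 0$, a one-line differentiation gives $S_k'' \geq 0$. Applying this convexity at the three integers $n, n+1, n+2$ yields the discrete midpoint inequality $2S_{n+1} \leq S_n + S_{n+2}$.

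Combining both observations,
\[
2 S_{n+1} \;\leq\; S_n + S_{n+2} \;=\; np + S_{n+2} \;<\; n(p+2) + S_{n+2},
\]
where the last inequality is strict because $2n > 0$. This is exactly the reformulated inequality, so the proof is done. I do not anticipate a real obstacle: the whole argument hinges on recognizing that $k \mapsto S_k$ is convex and on the neat cancellation $S_n = np$, both of which are elementary once one is looking for them. If the convexity step felt unsatisfying, it could equivalently be bypassed by squaring $S_{n+2} + n(p+2) > 2S_{n+1}$ and checking the resulting polynomial inequality in $n$ and $p$ by hand, but the convexity route is much cleaner.
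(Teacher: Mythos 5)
Your proof is correct, and it takes a genuinely different route from the paper's. Both arguments start from the same reformulation: with $S_k:=\sqrt{4k^2(p-1)+n^2(p-2)^2}$, the claim is equivalent to $S_{n+2}+n(p+2)>2S_{n+1}$ (your reduction is right, and $\lambda_{n+1}>0$ justifies clearing the denominator). The paper then squares this inequality to obtain a polynomial bound on $n(p+2)S_{n+2}$, estimates $S_{n+2}\geq np$ (which is your identity $S_n=np$ in disguise, via monotonicity), and finds that this crude bound only suffices for $n\geq 2$; the case $n=1$ has to be handled separately by squaring again and verifying positivity of the cubic $2p^3+7p^2+10p-19$ on $p>1$. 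Your argument replaces the one-sided bound $S_{n+2}\geq S_n$ by the stronger convexity inequality $2S_{n+1}\leq S_n+S_{n+2}$, which, combined with $S_n=np<n(p+2)$, closes the proof uniformly in $n\geq 1$ and $p>1$ with no case analysis and no polynomial bookkeeping. The strictness is correctly accounted for: even at $p=2$, where $S_k$ is linear in $k$ and the midpoint inequality degenerates to equality, the slack $2n>0$ in $np<n(p+2)$ keeps the final inequality strict. This is a cleaner and slightly more conceptual proof of the same lemma.
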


\begin{proof}
From (\ref{eps-lamb}) and some computation it follows that
inequality (\ref{lambda2}) is equivalent to
\begin{equation}\label{ineq09}
    n(p+2) \sqrt{ n^2 p^2 + 16(n+1) (p-1) } > n^2 p^2 + (-2n^2 + 8n)
    p - (2n^2 + 8n).
\end{equation}

Now we distinguish two cases. If $n=1$ then (\ref{ineq09}) becomes
\begin{equation}\label{n=1}
    (p+2) \sqrt{ p^2 + 32 (p-1) } > p^2 + 6p - 10.
\end{equation}
If the right hand is negative then the inequality follows.
Otherwise, squaring the previous inequality we get
\begin{equation*}
    2p^3 + 7 p^2 + 10p -19 > 0.
\end{equation*}
which holds for each $p>1$ since the left-hand side is increasing in
$p$ and vanishes for $p=1$. This proves (\ref{ineq09}) when $n=1$.

Now assume $n\geq 2$ and observe that $\sqrt{ n^2 p^2 + 16(n+1)
(p-1) } \geq np$ for each $p>1$. Then  (\ref{ineq09}) would follow
if:
\begin{equation*}
    n^2 p (p+2) > n^2 p^2 + (-2n^2 + 8n)
    p - (2n^2 + 8n),
\end{equation*}
which is equivalent to
\begin{equation*}
    (2n - 4) p + n + 4 > 0,
\end{equation*}
and holds trivially if $n \geq 2$. This finishes the proof of the
lemma.


\end{proof}

\begin{proof}[\textbf{Proof of Theorem 1}]
As stated at the introduction, we only need to prove that planar $p$
-harmonic functions satisfy (\ref{AMVP}) since the converse is well
known. We also discussed there that (\ref{AMVP}) need only to be
checked at a critical point. Therefore, we can assume that $x=0$,
$u(0)=0$ and that $0$ is a critical point of $u$.

 Let $r>0$ be small enough. By Proposition \ref{SINGULAREXPANSION} and
Lemma \ref{LEMMA-UGOTHIC},
\begin{equation}\label{ineq10}
        \int_{D(0,r)}\hspace{-0.35cm}u =  O \big ( r^{\frac{n+\lambda_{n+2}}{\lambda_{n+1}}}\big) .
\end{equation}
and
\begin{equation}\label{ineq11}
         \frac{1}{2}\Big ( \sup_{D(0,r)} u + \inf_{D(0,r)} u  \Big ) = O \big (  r^{\frac{n+\lambda_{n+2}}{\lambda_{n+1}}} \big).
\end{equation}
Finally, combine (\ref{ineq10}), (\ref{ineq11}) and divide by $r^2$
to obtain that for any $\alpha \in \R$
\begin{equation}
        \frac{1}{r^2}\Big [  \alpha \big( \frac{1}{2}\sup_{D(0,r)}u + \frac{1}{2}\inf_{D(0,r)}u \big) + (1-\alpha)
        \dashint_{D(0,r)} u
         \Big ] = O \big (  r^{\frac{n+\lambda_{n+2}}{\lambda_{n+1}}-2}\big ) .
\end{equation}

By \ref{LEMMA06} the exponent of $r$ in the right-hand side is
strictly positive.  Therefore, taking limits as $r\rightarrow 0$, we
show that (\ref{AMVP}) holds at the origin and we conclude the
proof.
\end{proof}

\begin{remark}
The proof actually shows that if $x$ is a critical point of the
$p$-harmonic function $u$ then (\ref{AMVP}) still holds at $x$ if
the coefficients $\displaystyle (p-2)/(p+2) $ and $\displaystyle
4/(p+2)$  are replaced by  $\alpha$ and $1-\alpha$ for arbitrary
$\alpha$.
\end{remark}

\end{document}